\documentclass{amsart}

\usepackage{amsthm}
\usepackage{amssymb}
\usepackage{latexsym}
\usepackage{amsfonts}
\usepackage{amsmath}
\usepackage{amstext}
\usepackage[numbers]{natbib}
\newtheorem{theorem}{Theorem}
\newtheorem{lemma}[theorem]{Lemma}
\newtheorem{corollary}[theorem]{Corollary}
\newcommand{\RR}{\mathbb{R}}

\newcommand{\ip}[2]{\ensuremath{\left\langle #1, #2 \right\rangle}}
\newcommand{\deriv}[3]{\ensuremath{\frac{#1 #2}{#1 #3}}}
\newcommand{\pd}[2][1]{\deriv{\partial}{#1}{#2}}
\newcommand{\abs}[1]{\ensuremath{\left|#1\right|}}
\newcommand{\avg}[1]{\ensuremath{\overline{#1}}}
\newcommand{\E}{\mathrm{e}}
\newcommand{\nor}[1][\empty]{\vec{N}_{#1}}
\newcommand{\tang}[1][\empty]{\vec{T}_{#1}}
\newcommand{\curv}{\ensuremath{k}}
\newcommand{\chord}[2]{\ensuremath{\overline{{#1}{#2}}}}

\begin{document}

\title[Grayson's theorem by distance comparison]{Curvature bound for
curve shortening flow via distance comparison and a direct proof of
Grayson's theorem} 
\author{Ben Andrews}
\address{MSI, 
	             ANU, 
	             ACT 0200, Australia}
	             \email{Ben.Andrews@anu.edu.au}
\author{Paul Bryan}
\address{MSI, 
	             ANU, 
	             ACT 0200, Australia}
	             \email{Paul.Bryan@anu.edu.au}
\begin{abstract}
A new isoperimetric estimate is proved for embedded closed curves evolving by curve shortening flow, normalized to have total length $2\pi$.  The estimate bounds the length of any chord from below in terms of the arc length between its endpoints and elapsed time.
Applying the estimate to short segments we deduce directly that the maximum curvature decays exponentially to $1$. This gives a self-contained 
proof of Grayson's theorem which does not require the monotonicity
formula or the classification of singularities.  
\end{abstract}

\keywords{Curve shortening flow, isoperimetric estimate, curvature bound}
\subjclass[2000]{53C44, 35K55, 58J35}
\maketitle

\section{Introduction}
\label{sec:intro}

Under the curve shortening flow, a curve in $\RR^2$ moves in its normal direction with speed given by its curvature.  More precisely, if $\tilde F_0:\ S^1\to\RR^2$ is a smooth immersion, we are 
concerned with the solution $\tilde F:\ [0,\tilde T)\to\RR^2$ of the initial value problem
\begin{align}
  \pd[\tilde{F}]{\tau} &= -\curv \nor\label{eq:csf_ivp} \\
  \tilde{F}(\cdot, 0) &= \tilde F_0(\cdot)\notag
\end{align}
where $\curv$ is the curvature with respect to the unit
normal $\nor$.  It is well known \cite[Section 2]{MR840401} that there exists a unique solution 
$\tilde F:\ S^1\times[0,\tilde T)\to\RR^2$ such that $\tilde F(.,\tau)$ is a smooth immersion for each $\tau$, and the maximum curvature $\curv_{\max}(\tau)=\max\{|\curv(s,\tau)|:\ s\in S^1\}$ becomes unbounded as $\tau$ approaches the maximal time $\tilde T$.

Following work by Gage \cite{MR726325}, \cite{MR742856} and Gage and Hamilton
\cite{MR840401} on the solution of \eqref{eq:csf_ivp} for convex curves, Grayson \cite{MR906392} proved that any embedded closed curve evolves to become convex, and subsequently shrinks to a point while becoming circular in shape.  His argument was rather delicate, requiring separate analyses of what may happen under various geometric configurations, and special arguments in each case to show that the curve must indeed become convex.   More recently the proof has been simplified by using isoperimetric estimates to rule out certain kinds of behaviour:  Huisken \cite{MR1656553} gave an isoperimetric estimate relating chord length to arc length, and Hamilton \cite{MR1369140} gave an estimate controlling the ratio of the isoperimetric profile to that of a circle of the same area.   Either of these arguments can be used to deduce Grayson's theorem, by making use of previous results concerning the classification of singularities:   If the maximum curvature remains comparable to that of a circle with the same extinction time, Huisken's monotonicity formula \cite{MR1030675} implies that the curve has asymptotic shape given by a self-similar solution of curve-shortening flow, which by the classification of Abresch and Langer \cite{MR845704} must be a circle.   Otherwise, one can use a blow-up procedure to produce a convex limiting curve to which Hamilton's Harnack estimate \cite{MR1316556} for the curve-shortening flow can be applied to show that it is a `grim reaper' curve (see for example the argument given in \cite[Section 8]{MR1131441}).  But this implies that the isoperimetric bound must be violated, proving Grayson's theorem.

Our purpose in this paper is to give a proof of Grayson's theorem which does not require any of the additional machinery described above:  By refining the isoperimetric argument of Huisken, we obtain stronger control on the chord distances, sufficient to imply a curvature bound.  The curvature bound we obtain is remarkably strong, and immediately implies that after rescaling the evolving curves to have length $2\pi$ the maximum curvature approaches $1$ at a sharp rate.  The convergence of the rescaled curves to circles is then straightforward.

Our result is most easily formulated in terms of a normalized flow, which we now introduce:  Given a solution $\tilde F$ of \eqref{eq:csf_ivp}, we define $F:\ S^1\times[0,T)\to\RR^2$ by
$$
F(p,t)=\frac{2\pi}{L[\tilde F(.,\tau)]}\tilde F(p,\tau),
$$
where
$$
t = \int_0^\tau \left(\frac{2\pi}{L[\tilde F(.,\tau')]}\right)^2\,d\tau',\qquad\text{and}\qquad
T=\int_0^{\tilde T}\left(\frac{2\pi}{L[\tilde F(.,\tau')]}\right)^2\,d\tau'.
$$
Then $L[F(.,t)]=2\pi$ for every $t$, and $F$ evolves according to the normalized equation
\begin{equation}
  \label{eq:csf_rescaled}
  \pd[F]{t} = \avg{\curv^2} F - \curv \nor
\end{equation}
where $\curv$ denotes the curvature of the normalized curve $F$, and we introduced the average curvature $\avg{\curv^2} =
(2\pi)^{-1}\int_{S^1}\curv^2$.  Our main result is an isoperimetric bound for embedded curves evolving by Equation \eqref{eq:csf_rescaled}, controlling lengths of chords in terms of the arc length between their endpoints and elapsed time.

\section{Distance comparison for smooth embedded curves}
\label{sec:distcomp}

We denote the chord length by $d(p,q,t)=|F(q,t)-F(p,t)|$, and the arc length along the curve $F(.,t)$ by $\ell(p,q,t)$.
Our main result is the following:
\begin{theorem}
\label{thm:distcomp}
Let $F:\ S^1\times[0,T)\to\RR^2$ be a smooth embedded solution of
the normalised curve-shortening flow \eqref{eq:csf_rescaled} with fixed total length $2
\pi$. Then there exists $\bar{t}\in\RR$ such that for every $p$ and $q$ in
$S^1$ and every $t\geq 0$,
\begin{equation}\label{eq:d.f.ineq}
  d(p,q,t) \geq 
  f\left(\ell(p,q,t),t-\bar{t}\right),
\end{equation}
where $f$ is defined by
$f(x,t) = 2\E^t\arctan\left(\E^{-t}\sin\left(\frac{x}{2}\right)\right)$ for $t\in\RR$ and $x\in[0,2\pi]$.
\end{theorem}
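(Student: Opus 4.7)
My plan is to apply the parabolic maximum principle to
$Z(p,q,t) = d(p,q,t) - f(\ell(p,q,t), t-\bar{t})$ on $S^1\times S^1\times[0,T)$. For the initial condition, note that $f(x,s)\leq \pi \E^s$ uniformly in $x\in[0,2\pi]$, so choosing $\bar t$ large enough that $t-\bar t$ is very negative at $t=0$ drives $f(\ell,-\bar t)$ uniformly below $d(p,q,0)$, using that $d/\ell$ is bounded below by a positive constant on the smooth embedded initial curve. I would then argue by contradiction: assume $Z$ becomes negative somewhere and let $(p_0,q_0,t_0)$, with $p_0\neq q_0$ and $t_0>0$, be the first violation, so that $Z(p_0,q_0,t_0)=0$ is attained as an interior spatial minimum.

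\textbf{Critical-point conditions.} Let $\omega$ be the unit chord vector from $F(p_0,t_0)$ to $F(q_0,t_0)$, and write $T_p,T_q,N_p,N_q$ for the unit tangents and normals. From $d_p=-\langle T_p,\omega\rangle$, $d_q=\langle T_q,\omega\rangle$, $\ell_p=-1$, $\ell_q=1$ (arc-length parameterization) and $Z_p=Z_q=0$ one obtains the equal-angle condition $\langle T_p,\omega\rangle=\langle T_q,\omega\rangle=f_x$, and hence $\langle N_p,\omega\rangle^2=\langle N_q,\omega\rangle^2=1-f_x^2$. The positive semi-definiteness of the spatial Hessian of $Z$ at $(p_0,q_0)$ provides two scalar inequalities upon testing in the characteristic directions $\partial_p\pm\partial_q$, coupling $\curv(p_0)$, $\curv(q_0)$ and $f_{xx}$ with the normal components of $\omega$.

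\textbf{Time derivative and closing the argument.} Differentiating $d$ and $\ell$ using \eqref{eq:csf_rescaled} yields
$$d_t = \overline{\curv^2}\,d + \curv(p_0)\langle N_p,\omega\rangle - \curv(q_0)\langle N_q,\omega\rangle, \qquad \ell_t = \overline{\curv^2}\,\ell - \int_{p_0}^{q_0}\curv^2\,ds.$$
Substituting into $Z_t = d_t - f_x\ell_t - f_t \leq 0$ and using the equal-angle identity to resolve the normal components, the two Hessian inequalities control the local $\curv$-terms, while the nonlocal $\int\curv^2$ is bounded below by a pointwise expression in $f$ and its derivatives (with the help of a convexity comparison along the arc). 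The inequality then collapses to a pointwise PDE identity that $f(x,s)$ must satisfy at $(\ell, t_0-\bar t)$. The explicit $f(x,s)=2\E^s\arctan(\E^{-s}\sin(x/2))$ is chosen precisely to be the chord-arc function of an explicit model solution --- an Angenent-oval-type barrier scaled to length $2\pi$ --- and therefore satisfies this identity. The main obstacle is the algebraic bookkeeping, in particular ensuring that the global $\overline{\curv^2}(d-f_x\ell)$ contribution cancels against the scaling-induced piece of $f_t$, so that the residual identity is exactly the one satisfied by $f$. Once this is verified, $Z_t>0$ strictly at the supposed first minimum, contradicting $Z_t\leq 0$ and forcing $Z\geq 0$ for all $t\geq 0$, which is the desired inequality.
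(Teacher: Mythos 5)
Your overall strategy coincides with the paper's (a maximum principle for $Z=d-f(\ell,t-\bar t)$, first- and second-order conditions at a spatial minimum, evolution of $d$ and $\ell$, reduction to a differential identity for $f$), but there are concrete gaps, and the most serious ones concentrate at the diagonal. For the initial condition: since $f(x,s)\leq 2\sin(x/2)\leq x$ and $f(x,-\bar t)/x\to 1$ as $x\to 0$ for every fixed $\bar t$, while also $d/\ell\to 1$ as $q\to p$, combining ``$f\leq\pi\E^{-\bar t}$'' with ``$d\geq c\,\ell$'' only gives $d\geq f(\ell,-\bar t)$ for $\ell$ bounded below; for short arcs both sides equal $\ell+O(\ell^3)$ and one must compare the cubic terms. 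The paper does this in Lemma \ref{lem:a0} via Taylor expansion, $d-f(\ell,-\bar t)=\tfrac{1}{24}\left(2\E^{2\bar t}+1-\curv^2\right)\ell^3+\dots$, so $\bar t$ must in fact be chosen so that $2\E^{2\bar t}\geq\sup\curv^2-1$; your stated reason does not produce any admissible $\bar t$. The same degeneracy undermines your ``first violation'' step: $Z\equiv 0$ on the diagonal for all $t$, so an interior off-diagonal minimum with $Z=0$ is not automatic — a negative infimum could be approached along the diagonal. The paper handles this with the perturbed quantity $Z_\varepsilon=Z+\varepsilon\E^{Ct}$, $C>\sup\avg{\curv^2}$, which is $\geq\varepsilon$ on the diagonal, so its first zero is genuinely off-diagonal; the same perturbation delivers the final strict contradiction, which your argument lacks: since $f$ satisfies the limiting identity $\tilde Lf=0$ with equality, your chain of inequalities closes only to $0\geq 0$, not to the asserted ``$Z_t>0$ strictly''.

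The second missing idea is the case analysis forced by the first-order condition $\ip{w}{\tang[p_0]}=\ip{w}{\tang[q_0]}=f'$. This admits two configurations: either $w$ bisects the angle between the tangents, or $\tang[p_0]=\tang[q_0]\neq w$. Your Hessian test in the directions $\partial_p\pm\partial_q$ produces the needed inequality $0\leq\ip{w}{\curv_{p_0}\nor[p_0]-\curv_{q_0}\nor[q_0]}-4f''$ only in the bisecting case. The parallel-tangent case must be excluded separately, and this is exactly where embeddedness enters (the theorem is false for merely immersed curves): there the chord leaves the enclosed region at one endpoint and enters at the other, hence meets the curve at a third point $s$ between $p_0$ and $q_0$, and the strict concavity of $f(\cdot,a)$ together with $f(0)=0$ and $f(x)=f(2\pi-x)$ yields $Z(p_0,q_0)>Z(p_0,s)+Z(s,q_0)$, contradicting minimality. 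Your proposal never uses embeddedness, so some such step is unavoidable. The remaining ingredients you sketch — the evolution equations for $d$ and $\ell$, bounding $\avg{\curv^2}\geq 1$ and $\int_{p_0}^{q_0}\curv^2\,ds\geq\theta^2/\ell$ with $\theta=2\arccos(f')$ by H\"older, and a convexity comparison collapsing everything onto an exact identity satisfied by $f$ — do match the paper's computation in outline.
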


\begin{proof}
We begin by proving that for any smooth embedded closed curve $F_0$ the inequality $d\geq f(\ell,-\bar{t})$ holds for sufficiently large $\bar{t}$.  In particular this implies there exists $\bar{t}\in\RR$ such that the inequality \eqref{eq:d.f.ineq} is satisfied at $t=0$.  First we compute
\begin{equation*}
  \pd[]{t}f(x,t) =
 2\E^t\left[\arctan(\E^{-t}\sin(x/2))-\frac{\E^{-t}\sin(x/2)}{(1+\E^{-2t}\sin^2(
  x/2))}\right] = 2\E^tg(\E^{-t}\sin(x/2)),
\end{equation*}
where $g(z) = \arctan z -\frac{z}{1+z^2}$.  Then $g(0)=0$ and 
$
  g'(z)=\frac{2z^2}{(1+z^2)^2}>0
$
for $z>0$, so $g(z)>0$, and $f$ is strictly
increasing in $t$.  Also note that $\lim_{t\to\infty}f(x,t)=2\sin(x/2)$, and $\lim_{t\to-\infty}f(x,t) = 0$.  Define $a(p,q)=\inf\{e^t:\ d(p,q)\geq f(\ell(p,q),-t)\}$ for $p\neq q$ in $S^1$.  Then by the implicit function theorem $a$ is continuous, and smooth and positive where $0<d<2\sin(\ell/2)$ where it is defined by the identity
\begin{equation}\label{eq:defa}
d(p,q) = f(\ell(p,q),-\log(a(p,q))).
\end{equation}

\begin{lemma}
\label{lem:a0}
The function $a$ extends to a continuous function on $S^1\times S^1$ by defining
$$
a(p,p)=\sqrt{\frac{\max\{\curv(p)^2-1,0\}}{2}}.
$$
In particular $\bar{a} =
\sup\left\{a(p,q):\ p\neq q\right\}$ is finite.
\end{lemma}

\begin{proof}
We must show that $a$ is continuous at each point $(p,p)$.
Fix $p$ and parametrise by arc length $s$ so that $F_0(0) = p_0$.  The Taylor expansion of $F_0$ about $s_1$ gives
\begin{align*}
F_0(s_2)-F_0(s_1) &= (s_2-s_1)\tang(s_1)-\frac{(s_2-s_1)^2}{2}\curv(s_1)\nor(s_1)\\
&\quad\null
-\frac{(s_2-s_1)^3}{6}\left(\curv_s(s_1)\nor(s_1)+\curv(s_1)^2\tang(s_1)\right)+o(|s_2-s_1|^4).
\end{align*}
Computing the squared length of this we find
\begin{align*}
d(s_1,s_2)^2&=|s_2-s_1|^2\left(1-\frac{(s_2-s_1)^2}{12}\curv(s_1)^2+O(|s_2-s_1|^3)\right)\\
&=|s_2-s_1|^2\left(1-\frac{(s_2-s_1)^2}{12}\curv(0)^2+O((|s_2|+|s_1|)|s_2-s_1|^2)\right).
\end{align*}
Since $\ell(s_2,s_1)=|s_2-s_1|$, it follows that 
$$
d(s_1,s_2) = \ell(s_1,s_2)-\frac{\ell(s_1,s_2)^3}{24}\left(\curv(0)^2+O((|s_2|+|s_1|))\right).
$$
Now the Taylor expansion of $f$ about $x=0$ gives
$$
f(x,-\log a) = x-\frac{1+2a^2}{24}x^3+O(x^4),
$$
so since $2\sin(x/2) = x-\frac{1}{24}x^3+O(x^4)$, the identity \eqref{eq:defa} gives for $\curv(0)^2>1$ that 
$$
\ell-\left(\frac{\curv(0)^2}{24}+O(|s_1|+|s_2|)\right)\ell^3 = \ell-\frac{1+2a^2}{24}\ell^3+O(\ell^4),
$$
so that $\max\{\curv(0)^2,1\} = 1+2a(s_1,s_2)^2+O(|s_1|+|s_2|)$.
In particular we have 
\begin{equation}\label{eq:lima}
\lim_{(s_1,s_2)\to(0,0)}a(s_1,s_2) = \sqrt{\frac{\max\{\curv(0)^2-1,0\}}{2}},
\end{equation}
proving that $a$ is continuous.
\qed\end{proof}

By the construction of $a$ and the monotonicity of $f$ in $a$ we have
$$
d(p,q)\geq f(\ell(p,q),-\log a(p,q))\geq f(\ell(p,q),-\bar{t}),
$$
where $\bar{t}=\log \bar a$, 
so the inequality in the Theorem holds for $t=0$.

To show the result for positive times we use a maximum principle
argument.  Define $Z:\ S^1\times
S^1\times[0,T)\to \RR$ by
\begin{equation}
  \label{eq:z}
  Z(p,q,t) = d(p,q,t) -
  f\left(\ell(p,q,t),t-\bar{t}\right).
\end{equation}
Note that $Z$ is continuous on $S^1\times S^1\times[0,T)$ and smooth where $p\neq q$.
Fix $t_1\in(0,T)$, and choose $C>\sup\{\avg{\curv^2}(t):\ 0\leq t\leq t_1\}$
We prove by contradiction that $Z_{\varepsilon}= Z+\varepsilon \E^{Ct}$ remains positive on $S^1\times S^1\times [0,t_1]$ for any $\varepsilon>0$.  At $t=0$ and on the diagonal $\{(p,p): \ p\in S^1\}$ we have $Z_{\varepsilon}\geq \varepsilon>0$,
so if $Z_{\varepsilon}$ does not remain positive then there
exists $t_0\in(0,t_1]$ and $(p_0,q_0)\in S^1\times S^1$ with $p_0\neq q_0$ such that
$Z_{\varepsilon}(p_0,q_0,t_0) = 0=\inf\{Z_{\varepsilon}(p,q,t):\ p,q\in S^1,\ 0\leq t\leq t_0\}$.
It follows that at $(p_0,q_0,t_0)$ we have $Z=\varepsilon\E^{Ct_0}$, $\pd[Z]{t}+C\varepsilon\E^{Ct_0}=\frac{\partial Z_\varepsilon}{\partial t}\leq 0$, while the first spatial derivative of $Z$ vanishes and the second is non-negative.

We parametrize using the arc-length parameter at time $t_0$, and choose the normal $\nor$ to point out of the region enclosed by the curve.  For arbitrary real $\xi$ and $\eta$, let $\sigma(u)=(p_0+\xi u,q_0+\eta u,t_0)$.  Then we compute
\begin{equation}\label{eq:deriv1}
\frac{\partial}{\partial u}Z(\sigma(u))=\xi\left(-\ip{w}{\tang[p]} + f'\right) +
  \eta\left(\ip{w}{\tang[q]} -f'\right),
\end{equation}
where $f'$ denotes the derivative in the first argument, $\tang[p]=\frac{\partial F}{\partial s}(p,t)$, and we define for $p\neq q$
\begin{equation*}
  w(p,q,t) = \frac{F(q, t) - F(p, t)}{d(p, q,t)}.
\end{equation*}
The right-hand side of Equation \eqref{eq:deriv1} vanishes at $u=0$, so we have
\begin{equation}
  \label{eq:firstderiv}
 f' = \ip{w}{\tang[p_0]} =
  \ip{w}{\tang[q_0]}.
\end{equation}
There are two possibilities:   Either
$\tang[q_0] = \tang[p_0]\neq w$, or $w$ bisects $\tang[p_0]$ and $\tang[q_0]$.

We begin by ruling out the first case.  Since $\tang[p_0]=\tang[q_0]\neq w$, the normal makes an acute angle with the chord $\chord{p_0}{q_0}$ at one endpoint, and an obtuse angle at the other. Therefore points on the chord near one endpoint are inside the region, while points near the other endpoint are outside, implying that there is at least one other point where the curve $F(.,t_0)$ meets the chord.  We may assume that an
intersection occurs at $s$ with $p_0 < s < q_0$.  Then we have
\begin{align*}
  d(p_0,q_0) &= d(p_0,s) + d(s,q_0) \\
  \ell(p_0,q_0) &= \min\{\ell(p_0,s) + \ell(s,q_0),2\pi-\ell(p_0,s)-\ell(s,q_0)\}
\end{align*}
$f=f(.,a)$ is strictly concave, so $f(x+y)=f(x+y)+f(0)\leq f(x)+f(y)$ whenever $x,y>0$ and $x+y<2\pi$.  Noting also that $f(x)=f(2\pi-x)$, we have
\begin{align*}
  Z(p_0, q_0) &= d(p_0, q_0) -f\left(\ell(p_0, q_0),a\right) \\ 
  &= d(p_0,s)+d(s,q_0)-f\left(\ell(p_0,s)+\ell(s,q_0),a\right)\\
  &> d(p_0, s) - f\left(\ell(p_0, s), a\right) + d(s, q_0) - 
  f\left(\ell(s, q_0), a\right) \\
  &= Z(p_0, s) + Z(s, q_0)
\end{align*}
and so either $Z(p_0, s) < Z(p_0, q_0)$ or $Z(s, q_0) < Z(p_0, q_0)$, which is impossible.

Now let us consider the second case.  The second derivative of $Z$ along $\sigma$ is
\begin{equation*}
\begin{split}
  \frac{\partial^2}{\partial u^2}Z(\sigma(u))\Big|_{u=0}&= \xi^2\left[\frac{1}{d}\left(1 -
  \ip{w}{\tang[p_0]}^2\right) + \ip{w}{\curv_{p_0}\nor[p_0]} -
  f''\right] \\
  & + \eta^2\left[\frac{1}{d}\left(1 - \ip{w}{\tang[q_0]}^2\right) -
  \ip{w}{\curv_{q_0}\nor[q_0]} - f''\right] \\
  & + 2\xi\eta
  \left[\frac{1}{d}\left(\ip{w}{\tang[p_0]}\ip{w}{\tang[q_0]} -
  \ip{\tang[p_0]}{\tang[q_0]}\right) +
 f''\right]
\end{split}
\end{equation*}
Since $w$ bisects $\tang[p_0]$ and $\tang[q_0]$ we can write
$\ip{\tang[p_0]}{w}=\ip{\tang[q_0]}{w}\cos\theta$ and 
$\ip{\tang[p_0]}{\tang[q_0]} = 2\cos^2\theta-1$.  Choosing $\xi = 1$ and $\eta=-1$ then gives
\begin{equation}
  \label{eq:d2z}
  0 \leq \ip{w}{\curv_{p_0} \nor[p_0] - \curv_{q_0}
  \nor[q_0]} - 4f''.
\end{equation}

Under the rescaled flow equation \eqref{eq:csf_rescaled}, $d$ and $\ell$ evolve as follows:
\begin{align*}
  \pd[d]{t} &= \frac{1}{d}\ip{-\curv_{p_0} \nor[p_0] +
    \avg{\curv^2} F_{p_0} +\curv_{q_0} \nor[q_0] -
    \avg{\curv^2} F_{q_0}}{F_{p_0} - F_{q_0}} =
  \ip{w}{\curv_{p_0} -\curv_{q_0}} + \avg{\curv^2}d; \\
  \pd[\ell]{t} &= \avg{\curv^2} \ell - \int_{p_0}^{q_0} k^2 ds.
\end{align*}
The latter is obtained from Equation
\eqref{eq:csf_rescaled} as in \cite[Lemma 3.1.1]{MR840401} to compute
\begin{equation*}
  \pd[]{t}\abs{\pd[F]{p}} = (\avg{\curv^2} - \curv^2)\abs{\pd[F]{p}}.
\end{equation*}
From these we obtain an expression for the time derivative of $Z$:
\begin{align*}
  -C\varepsilon\E^{Ct_0}\geq \pd[Z]{t} &= \pd[d]{t} - f'\pd[\ell]{t} -\pd[f]{t} \\
  &= \ip{w}{\curv_{p_0} \nor[p_0] - \curv_{q_0} \nor[q_0]} +
  \avg{\curv^2}d - f' \left(\avg{\curv^2} \ell -
    \int_{p_0}^{q_0} \curv^2 ds\right)  -\pd[f]{t}  \\
  &= \ip{w}{\curv_{p_0} \nor[p_0] - \curv_{q_0}
    \nor[q_0]}+ \avg{\curv^2} (\varepsilon\E^{Ct_0} + f - f'\ell)
    + f'\int_{p_0}^{q_0} \curv^2 ds
   -\pd[f]{t} . 
\end{align*}
From equation \eqref{eq:d2z},
\begin{equation}
  \label{eq:dzdt}
  -C\varepsilon\E^{Ct_0} \geq 4f''+\avg{\curv^2} \left(\varepsilon\E^{Ct_0} + f - f'\ell\right)+ f'\int_{p_0}^{q_0} \curv^2 ds
  -\pd[f]{t} .
\end{equation}
Now we observe that since $f$ is concave, $(f-f'\ell)'=-f''\ell>0$, so $f-f'\ell>0$ for $\ell>0$.  We estimate the coefficient $\avg{\curv^2}$ of $f-f'\ell$ using H\"older's inequality, to give $\avg{\curv^2}\geq \left(\avg{\curv}\right)^2=1$, since $\int\curv ds=2\pi=\int\,ds$.  Since $\ell\leq\pi$ we also have $f'\geq 0$, so we can also estimate the second-last term in \eqref{eq:dzdt} using H\"older's inequality: 
$$
\int_{p_0}^{q_0}\curv^2ds\geq \frac{\left(\int_{p_0}^{q_0}|\curv|ds\right)^2}{\ell}
\geq\frac{\theta^2}{\ell},
$$
where $\theta$ is the angle between $\tang[p_0]$ and $\tang[q_0]$.  This is twice the angle between $\tang[p_0]$ and $w$, so by Equation \eqref{eq:firstderiv} we have
$\theta=2\arccos(f')$, and \eqref{eq:dzdt} becomes
$-C\varepsilon\E^{Ct_0}\geq Lf +\avg{\curv^2}\varepsilon\E^{Ct_0}$ and hence $Lf<0$ by our choice of $C$, where
$$
L f = 4f''+f-f'\ell + 4\frac{f'}{\ell}\left(\arccos(f')\right)^2 -\pd[f]{t} .
$$
We make one further estimation:  Observing that $z\mapsto h(z):=(\arccos(z))^2$ is a convex function on $[0,1]$ we estimate 
$$
h(f') \geq h(\cos(\ell/2)) + h'(\cos(\ell/2))(f'-\cos(\ell/2)) = \frac{\ell^2}{4}-\frac{\ell}{\sin(\ell/2)}(f'-\cos(\ell/2)).
$$
This gives $Lf\geq \tilde{L}f$, where
$$
\tilde{L}f = 4f''+f-\frac{4f'}{\sin(\ell/2)}\left(f'-\cos(\ell/2)\right) -\pd[f]{t} .
$$
Thus we have a contradiction if $\tilde{L}f\geq 0$, and $f$ is concave for each $t$.  We leave it to the reader to check that $f$ is in fact a solution of 
$\tilde{L}f=0$.   We remark that while our own discovery of the function $f$ was purely serendipitous, it could reasonably be produced by changing variable from $\ell$ to $\sin(\ell/2)$ and seeking a similarity solution of $\tilde Lf=0$. \end{proof}

\section{The curvature bound and long time existence}
\label{sec:curv}

\begin{theorem}
\label{thm:curv}
With $\bar{t}$ as in Theorem \ref{thm:distcomp}, we have
$$
\sup\{\curv(p,t)^2:\ p\in S^1\}\leq 1+2e^{-2(t-\bar{t})}
$$
for $0\leq t<T$.
\end{theorem}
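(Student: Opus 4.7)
The plan is to convert the chord-length inequality of Theorem~\ref{thm:distcomp} into a pointwise bound on curvature by passing to the diagonal $q\to p$, using the asymptotic expansion already established in Lemma~\ref{lem:a0}.

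More precisely, fix $t\in[0,T)$ and apply Lemma~\ref{lem:a0} to the smooth embedded curve $F(\cdot,t)$. This produces a continuous function $(p,q)\mapsto a(p,q,t)$ on $S^1\times S^1$ which, for $p\neq q$ with $0<d<2\sin(\ell/2)$, is characterized by the identity
$$
d(p,q,t)=f\bigl(\ell(p,q,t),-\log a(p,q,t)\bigr),
$$
and which on the diagonal satisfies
$$
a(p,p,t)=\sqrt{\tfrac{\max\{\curv(p,t)^2-1,0\}}{2}}.
$$

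Since $f$ is strictly increasing in its second argument (as shown at the beginning of the proof of Theorem~\ref{thm:distcomp}), the inequality $d(p,q,t)\geq f(\ell(p,q,t),t-\bar t)$ is equivalent to
$$
-\log a(p,q,t)\geq t-\bar t,\qquad\text{i.e.,}\qquad a(p,q,t)\leq \E^{-(t-\bar t)},
$$
for all $p\neq q$. Letting $q\to p$ and invoking the continuity of $a(\cdot,\cdot,t)$ together with the diagonal limit~\eqref{eq:lima} (applied to $F(\cdot,t)$), we obtain
$$
\sqrt{\tfrac{\max\{\curv(p,t)^2-1,0\}}{2}}\leq \E^{-(t-\bar t)}.
$$
Squaring and rearranging yields $\curv(p,t)^2\leq 1+2\E^{-2(t-\bar t)}$, and taking the supremum over $p\in S^1$ gives the claimed bound.

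The only nontrivial step is the reduction of Theorem~\ref{thm:distcomp} to the bound $a\leq \E^{-(t-\bar t)}$, which is immediate from monotonicity of $f$ in $t$; once one has this, the rest is simply quoting the Taylor expansion that was already carried out in Lemma~\ref{lem:a0} at the level of the initial curve. There is essentially no obstacle: the real work has all been done in Theorem~\ref{thm:distcomp} and Lemma~\ref{lem:a0}, and this theorem is the payoff.
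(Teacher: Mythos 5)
Your proof is correct and is essentially identical to the paper's: both apply Lemma~\ref{lem:a0} to the curve at time $t$, use monotonicity of $f$ in its second argument to translate Theorem~\ref{thm:distcomp} into the bound $a(p,q,t)\leq \E^{\bar t - t}$ off the diagonal, and then pass to the diagonal by continuity to bound $\sqrt{\max\{\curv(p,t)^2-1,0\}/2}$. Nothing further is needed.
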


\begin{proof}
By Lemma \ref{lem:a0} and Theorem \ref{thm:distcomp}, for $t\geq 0$ we have
for each $p\in S^1$ 
$$
\sqrt{\frac{\max\{\curv(p,t)^2-1,0\}}{2}} = a(p,p,t)\leq \sup\{a(p,q,t):\ p\neq q\}\leq \E^{\bar{t}-t}.
$$
\qed\end{proof}

\begin{corollary}\label{cor:LTE}
$T=\infty$, and $\left|\frac{\partial^n\curv}{\partial s^n}\right|\leq C(n,\bar{t})(1+t^{-n/2})$ for each $n>0$ and $t>0$.
\end{corollary}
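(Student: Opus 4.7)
My plan has two parts: Bernstein-type higher-derivative estimates, and a standard extension argument for long-time existence. Under the normalized flow \eqref{eq:csf_rescaled}, the curvature satisfies the scalar parabolic equation
\begin{equation*}
\partial_t \curv = \partial_s^2 \curv + \curv^3 - \avg{\curv^2}\curv,
\end{equation*}
obtained by rescaling the standard CSF evolution $\partial_\tau \tilde\curv = \partial_{\tilde s}^2 \tilde\curv + \tilde\curv^3$. By Theorem~\ref{thm:curv} we have $\curv^2 \leq 1+2\E^{2\bar t}$ on $[0,T)$, and the same bound holds for $\avg{\curv^2}$, so the source is polynomial in the bounded quantity $\curv$. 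The commutator I will need, derived from the identity $\pd[]{t}\abs{\pd[F]{p}} = (\avg{\curv^2} - \curv^2)\abs{\pd[F]{p}}$ used in the proof of Theorem~\ref{thm:distcomp}, is $[\partial_t,\partial_s] = (\curv^2-\avg{\curv^2})\partial_s$.

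I would prove $|\partial_s^n \curv| \leq C(n,\bar t)(1 + t^{-n/2})$ by induction on $n$, with $n=0$ supplied by Theorem~\ref{thm:curv}. For the inductive step I would apply the maximum principle to
\begin{equation*}
\phi_n = t^n |\partial_s^n \curv|^2 + A_n\, t^{n-1}|\partial_s^{n-1}\curv|^2,
\end{equation*}
for a sufficiently large constant $A_n = A_n(n,\bar t)$. Differentiating $\phi_n$ along the flow produces a good term $-2 t^n |\partial_s^{n+1}\curv|^2$ from the Laplacian, a bad term $n t^{n-1}|\partial_s^n \curv|^2$ from differentiating $t^n$, and a polynomial expression in $\partial_s^j \curv$ for $j \leq n$ arising from the commutator and from differentiating $\curv^3 - \avg{\curv^2}\curv$. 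The bad term is absorbed by the $-2 A_n t^{n-1}|\partial_s^n\curv|^2$ contribution once $A_n > n/2$, while the remaining lower-order terms are dominated using the induction hypothesis and Young's inequality (splitting any factor of $|\partial_s^n \curv|$ into a piece absorbed by the gradient term and a lower-order remainder). The maximum principle then yields $\phi_n \leq C(n,\bar t)$, which rearranges to the claimed estimate for $t>0$.

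For $T=\infty$: suppose $T<\infty$ and derive a contradiction. The derivative bounds provide uniform $C^\infty$ control on $\curv$ on $[T/2,T)$; integrating \eqref{eq:csf_rescaled} and noting that $F$ remains bounded on the finite time interval $[0,T]$ (the fixed length $2\pi$ controls the diameter up to translation, and the ODE bound on $|\partial_t F|$ controls the translation) then gives uniform $C^\infty$ bounds on $F$. Hence $F(\cdot,t)$ converges in $C^\infty$ as $t \to T^-$ to a smooth immersion, which is embedded by Theorem~\ref{thm:distcomp}. Short-time existence for \eqref{eq:csf_rescaled}, equivalent via the time change to short-time existence for \eqref{eq:csf_ivp}, then extends the flow past $T$, contradicting maximality. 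The main obstacle is the Bernstein induction: the evolution of $\phi_n$ and the systematic absorption of lower-order terms is routine for quasilinear parabolic equations on curves, but the bookkeeping at each $n$ requires care.
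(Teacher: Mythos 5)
Your derivative estimates are essentially the paper's argument: the paper also invokes a ``standard bootstrapping argument,'' citing for $n=1$ the maximum principle applied to $t\left|\frac{\partial\curv}{\partial s}\right|^2+\curv^2$, which is exactly your $\phi_1$; your inductive $\phi_n$ is the standard extension of this, and your evolution equation and commutator for the normalized flow are correct. Where you genuinely diverge is in proving $T=\infty$. The paper does not build a smooth limit at time $T$ and restart: it simply undoes the normalization, writing $\tilde F(p,\tau)=\lambda(t)F(p,t)$ with $\lambda(t)=\frac{L[\tilde F_0]}{2\pi}\exp\left(-\int_0^t\avg{\curv^2}\,dt'\right)$, observes that if $T<\infty$ then the curvature bound of Theorem \ref{thm:curv} keeps $\avg{\curv^2}$ bounded, hence $\lambda$ bounded away from $0$ on $[0,T]$, hence $\curv_{\max}(\tau)=\curv/\lambda$ bounded on $[0,\tilde T)$ --- contradicting the blow-up criterion for \eqref{eq:csf_ivp} quoted in the introduction (maximum curvature must become unbounded as $\tau\to\tilde T$). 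That route needs only the $C^0$ curvature bound and no derivative estimates or compactness; your route needs the full Bernstein hierarchy first, plus the (correct but unproved-here) facts that the limit is an immersion and that the distance comparison passes to the limit to give embeddedness, before short-time existence can be invoked. Both work; the paper's is shorter because it outsources the contradiction to the known continuation criterion for the unnormalized flow. One small bookkeeping caveat on your Bernstein step: a global-in-time bound $\phi_n\leq C$ cannot come from a single application of the maximum principle, since the inhomogeneous terms carry factors of $t^n$ that grow; the standard fix is to run the argument on unit time intervals $[t_0,t_0+1]$ with weight $(t-t_0)^n$ and combine, which is what produces the stated $C(n,\bar t)(1+t^{-n/2})$ rather than $Ct^{-n/2}$.
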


\begin{proof}
Suppose $T<\infty$.  Theorem \ref{thm:curv} gives a bound on curvature of the form $|\curv(p,t)|\leq C$ for all $t\in[0,T]$.  The un-normalized equation can be recovered from the normalized one by
setting $\lambda(t) = \frac{L[\tilde F_0]}{2\pi}\exp\left(-\int_0^t\avg{\curv^2}(t')dt'\right)$, and defining
$\tilde F(p,\tau) = \lambda(t)F(p,t)$, where $\tau = \int_0^t \lambda(t')^2\,dt'$.  In particular, $\curv_{\max}(\tau)$ is bounded for $\tau\in[0,\tilde T)$, which is impossible.

The bounds follow from a standard bootstrapping argument.  For example, bounds on $\frac{\partial\curv}{\partial s}$ can be obtained by applying the maximum principle to the evolution equation for $t\left|\frac{\partial\curv}{\partial s}\right|^2+\curv^2$, given that $\curv$ is bounded.

\end{proof}

\section{Exponential convergence of the Normalised Flow}
\label{sec:convergence}

Now we deduce exponential convergence of the curvature to $1$.  First we
observe that since $\int\curv ds=L=2\pi$, 
\begin{align*}
  \int (\curv(s) - 1)^2 ds & = \int \curv^2 ds -2\int\curv ds +L\\
  & = \int (\curv^2-1) ds\\
  & \leq 2e^{-2(t-\bar{t})}.
\end{align*}
Stronger convergence via Gagliardo-Nirenberg inequalities (see
\cite[Theorem 19]{MR1665677}) which state that since $\int(\curv-1)ds=0$,
$$
\|D^i\curv\|_\infty\leq C(m,i)\|D^m\curv\|_\infty^{\frac{2i+1}{2m+1}}\|\curv-1\|_2^{\frac{2(m-i)}{2m+1}}
\leq C(i,\bar{t},\varepsilon)\E^{-(1-\varepsilon)t}
$$
for $t\geq 1$ and any $\varepsilon>0$, using the estimates from Corollary 1 and with $m$ chosen large enough for given $\varepsilon>0$.
Thus $\curv(s) \to 1$ in $C^\infty$ as $t \to \infty$.  It follows that the normalized curves converge modulo translations to a unit circle, exponentially fast in $C^k$ for any $k$.  The result of Grayson's theorem follows using the formulae for the unnormalized curves given in the proof of Corollary \ref{cor:LTE}.


\bibliographystyle{plainnat}

\end{document}